\newtheorem{thm}{Theorem}[section]
\newtheorem{lem}[thm]{Lemma}
\theoremstyle{definition}
\theoremstyle{remark}
\numberwithin{equation}{section}
\begin{document}

\title[ ]{On the automorphism groups of regular hyper-stars and folded hyper-stars}%
\author{S. Morteza Mirafzal }%
\address{Department of Mathematics, University of Isfahan, Isfahan 81746-73441, Iran}

\email{smortezamirafzal@yahoo.com}

\email{mirafzal@math.ui.ac.ir}%


\begin{abstract}
 The hyper-star graph $HS(n,k)$ is defined as follows : its
vertex-set is the set of $ \{0,1\} $-sequences of length $n$ with
weight $k$, where the weight of a sequence $v$ is the number of
$1^,s$ in $v$, and two vertices are adjacent if and only if one can
be obtained from the other by exchanging the first symbol with a
different symbol ( $1$ with $0$, or $0$ with $1$ ) in another
position.
  In this paper, we will find  the
automorphism groups of regular hyper-star and folded hyper-star
graphs.
 Then, we will show that, only the graphs $ HS(4,2) $ and $FHS(4,2)$ are  Cayley
 graphs. \

 \

 Keywords : Vertex transitive graph;
 Permutation group; Symmetric graph; Cayley graph \

 \

 AMS Subject Classifications: 05C25; 94C15

\end{abstract}

\maketitle
\section{\bf Introduction and Preliminaries}
An interconnection network can be represented as an undirected graph
where a processor is represented as a  vertex  and a communication
channel between processors as an edge between corresponding
vertices. Measures of the desirable properties for interconnection
networks include degree, connectivity, scalability, diameter, fault
tolerance, and symmetry $[1]$. For example in $[4,6]$ have been
found the symmetries of two important classes of graphs.  The main
aim of this paper is to study the symmetries of a class of graphs
that are useful in some aspects for designing some interconnection
networks. First major class of interconnection networks is the
classical n-cubes. Star graphs were introduced by $[1]$ as a
competitive model to the n-cubes. Both the n-cubes and Star graphs
have been studied and many of the properties are known and star
graphs have proven to be superior to the n-cubes. The hyper-star
graphs were introduced in $[8]$ as competitive model to both n-cubes
and star graphs. Some of the structural and topological properties
of hyper-star graphs have been studied in $[3,7]$. For all the
terminology and notation not defined here, we follow $[2,5,10]$. Let
$n>2$, the hyper-star graph $HS(n,k)$ where, $1\leq k \leq n-1 $, is
defined in $[8]$ as follows : its vertex-set is the set of $ \{0,1\}
$-sequences of length $n$ with weight $k$, where the weight of the
sequence $v$ is the number of $1^,s$ in $v$, and two vertices are
adjacent if and only if one can be obtained from the other by
exchanging the first symbol with a different symbol ( $1$ with $0$
or $0$ with $1$ ) in another position. Formally, if we denote by $
V(HS(n,k))$ and $E(HS(n,k))$ the vertex-set and edge-set of
$HS(n,k)$ respectively, then

$ V=V(HS(n,k))=\{x_1x_2\cdots x_n \mid x_i \in\{0 ,1 \}, \sum
_{j=1}^{n}x_j =k \}$

$ E = E(HS(n,k)) = \{\{u,v \} \mid u=x_1x_2 \cdots x_n , v = x_ix_2
\cdots x_{i-1}x_1x_{i+1}\cdots$ $ x_n,
 x_1 =
 x_i^c  \} $, where $x^c$
is the complement of $ x$ $ ( 0^c = 1 $   and   $ 1^c = 0 )$. It is
clear that the degree of a vertex $v$ of $HS(n,k)$ is, $n-k $ if
$1\in v $,  or is $k$ if $1\notin v $. So $HS(n,k)$ is regular if
and only if $ n = 2k $.

Let $X=\{1,2,...,n\}$ and $X_k $ be the family of subsets of $X$
with $k$ elements. Let $S(n,k)$ be the graph  with vertex-set $X_k$
and two vertices $v = \{x_1,\cdots ,x_k\}$ and $w = \{y_1,\cdots
,y_k\}$ are adjacent if and only if $\mid v \cap w \mid  = k-1   $
and, $1$ belongs to one, and only one,  of the vertices $v$ and $w$,
in other words $w$ is obtained from $v$ by replacing an element $y
\in X-v $ with $1$,  if $1\in v $, and replacing $x \in v$ by $1$
if, $1 \notin v$. Let $A$ be a subset of $X$, then the
characteristic function of $A$ is the function $ \chi_{A} :X
\longrightarrow \{0,1\}$ such that $\chi_{A}(x) =1$, if and only if
$x\in A$. Thus $ A \longmapsto \chi_{A} $ is a bijection between the
family of subsets of $X$ and the set of sequences of $ \{{0,1\}} $
of length $n$. The graphs $\Gamma_1 = (V_1,E_1)$ and $\Gamma_2 =
(V_2,E_2)$ are called isomorphic, if there is a bijection $\alpha :
V_1 \longrightarrow V_2 $   such that, $\{a,b\} \in E_1$ if and only
if $\{\alpha(a),\alpha(b)\} \in E_2$ for all $a,b \in V_1$. in such
a case the bijection $ \alpha $ is called an isomorphism. Now it is
an easy task to show that the graphs $HS(n,k)$ and $S(n,k)$ are
isomorphic, in fact the correspondence $A \longmapsto \chi_{A}$ is
an isomorphism between $S(n,k)$ and $HS(n,k)$, and for this reason,
from now on, we work with $S(n,k)$ and we denote it by $HS(n,k)$.
The following figure shows the graph $HS(6,3)$, where the set
$\{x,y,z\}$ is denoted by $xyz$.

\begin{figure}[h]
\def\emline#1#2#3#4#5#6{%
\put(#1,#2){\special{em:moveto}}%
\put(#4,#5){\special{em:lineto}}}
\def\newpic#1{}
%
%
%
\unitlength 0.4mm
\special{em:linewidth 0.5pt}
\linethickness{0.4pt}
\begin{picture}(0,0)(100,80)

%
\put(-5,29){\circle*{2}} \put(-4,8){\circle*{2}}
\put(18,-14){\circle*{2}} \put(18,51){\circle*{2}}
\put(104,73){\circle*{2}} \put(105,-35){\circle*{2}}
\put(190,51){\circle*{2}} \put(211,29){\circle*{2}}
\put(211,8){\circle*{2}} \put(190,-14){\circle*{2}}
\put(105,-22){\circle*{2}} \put(105,60){\circle*{2}}
\put(39,29){\circle*{2}} \put(41,7){\circle*{2}}
\put(83,29){\circle*{2}} \put(83,5){\circle*{2}}
\put(168,29){\circle*{2}} \put(127,3){\circle*{2}}
\put(168,5){\circle*{2}} \put(168,29){\circle*{2}}
\put(126,32){\circle*{2}}
\emline{-5}{29}{1}{-4}{8}{2}
\emline{-5}{29}{1}{18}{51}{2}
\emline{-5}{29}{1}{83}{5}{2}
\emline{-4}{8}{1}{18}{-14}{2}
\emline{-4}{8}{1}{168}{29}{2}
\emline{18}{-14}{1}{105}{-35}{2}
\emline{18}{-14}{1}{41}{7}{2}
\emline{18}{51}{1}{104}{73}{2}
\emline{18}{51}{1}{39}{29}{2}
\emline{104}{73}{1}{190}{51}{2}
\emline{104}{73}{1}{105}{60}{2}
\emline{105}{-35}{1}{190}{-14}{2}
\emline{105}{-35}{1}{105}{-22}{2}
\emline{190}{51}{1}{211}{29}{2}
\emline{190}{51}{1}{168}{29}{2}
\emline{211}{29}{1}{211}{8}{2}
\emline{211}{29}{1}{41}{7}{2}
\emline{211}{8}{1}{190}{-14}{2}
\emline{211}{8}{1}{126}{32}{2}
\emline{190}{-14}{1}{168}{5}{2}
\emline{105}{-22}{1}{83}{5}{2}
\emline{105}{-22}{1}{127}{3}{2}
\emline{105}{60}{1}{83}{29}{2}
\emline{105}{60}{1}{126}{32}{2}
\emline{39}{29}{1}{41}{7}{2}
\emline{39}{29}{1}{127}{3}{2}
\emline{83}{29}{1}{83}{5}{2}
\emline{83}{29}{1}{168}{5}{2}
\emline{168}{29}{1}{168}{5}{2}
\emline{127}{3}{1}{126}{32}{2}
\put(88,-45){\makebox(0, 0)[cc]{Fig. 1. HS(6,3) graph}}
\put(89,-22){\makebox(0, 0)[cc]{356}} \put(152,2){\makebox(0,
0)[cc]{164}} \put(108,2){\makebox(0, 0)[cc]{135}}
\put(68,1){\makebox(0, 0)[cc]{136}} \put(40,-1){\makebox(0,
0)[cc]{125}} \put(153,38){\makebox(0, 0)[cc]{642}}
\put(126,41){\makebox(0, 0)[cc]{435}} \put(71,38){\makebox(0,
0)[cc]{436}} \put(196,54){\makebox(0, 0)[cc]{142}}
\put(114,-37){\makebox(0, 0)[cc]{156}} \put(196,-15){\makebox(0,
0)[cc]{654}} \put(219,8){\makebox(0, 0)[cc]{154}}
\put(219,30){\makebox(0, 0)[cc]{524}} \put(37,39){\makebox(0,
0)[cc]{523}} \put(-2,-11){\makebox(0, 0)[cc]{256}}
\put(-20,12){\makebox(0, 0)[cc]{126}} \put(-19,37){\makebox(0,
0)[cc]{326}} \put(14,61){\makebox(0, 0)[cc]{123}}
\put(112,61){\makebox(0, 0)[cc]{143}} \put(84,81){\makebox(0,
0)[cc]{423}}

\end{picture}
\end{figure}
\

\

\

\

\

\

\

\

\

An automorphism of a graph $\Gamma $ is an isomorphism of $\Gamma $
with itself. The set of all automorphisms of $\Gamma$, with the
operation of composition of functions, is a group, called the
automorphism group of $\Gamma$ and denoted by $ Aut(\Gamma)$. A
permutation of a set is a bijection of it with itself.  The group of
all permutations of a set $V$ is denoted by $Sym(V)$, or just
$Sym(n)$ when $\mid V \mid =n $. A permutation group $G$ on $V$ is a
subgroup of $Sym(V)$. In this case we say that $G$ acts on $V$. If
$\Gamma$ is a graph with vertex-set $V$, then we can view each
automorphism as a permutation of $V$, and so $Aut(\Gamma)$ is a
permutation group. Let $G$ act on $V$, we say that $G$ is transitive
( or $G$ acts transitively on $V$ ) if there is just one orbit. This
means that given any two elements $u$ and $v$ of $V$, there is an
element $ \beta $ of  $G$ such that $\beta (u)= v $.

The graph $\Gamma$ is called vertex transitive if  $Aut(\Gamma)$
acts transitively on $V(\Gamma)$.The action of $Aut(\Gamma)$ on
$V(\Gamma)$ induces an action on $E(\Gamma)$  by the rule
$\beta\{x,y\}=\{\beta(x)$, $\beta(y)\}, \beta\in Aut(\Gamma)$, and
$\Gamma$ is called edge transitive if this action is transitive.The
graph $\Gamma$ is called symmetric, if for all vertices $u, v, x,
y,$ of $\Gamma$ such that $u$ and $v$ are adjacent, and $x$ and $y$
are adjacent, there is an automorphism $\alpha$ such that
$\alpha(u)=x,   and, \alpha(v)=y$. It is clear that a symmetric
graph is vertex transitive and edge transitive.

For $v\in V(\Gamma)$ and $G=Aut(\Gamma)$, the stabilizer subgroup
$G_v$ is the subgroup of $G$ containing all automorphisms which fix
$v$. In the vertex transitive case all stabilizer subgroups $G_v $
are conjugate in $G$, and consequently isomorphic, in this case, the
index of $G_v$ in $G$ is given by the equation,  $ | G : G_v |
=\frac{| G |}{| G_v |} =| V(\Gamma)| $. If each stabilizer $ G_v $
is the identity group, then every element of $G$, except the
identity, does not fix any vertex, and we say that $G$ acts
semiregularly on $V$. We say that $G$ acts regularly on $V$ if and
only if $G$ acts transitively and semiregularly on $ V$
 and in this case we
have $\mid V \mid = \mid G \mid$  .

Let $G$ be any abstract finite group with identity $1$, and
suppose that $\Omega$ is a set of generators of $G$, with the
properties :

(i) $x\in \Omega \Longrightarrow x^{-1} \in \Omega; (ii) 1\notin
\Omega $ ;

The Cayley graph $\Gamma=\Gamma (G, \Omega )$ is the ( simple )
graph whose vertex-set and edge-set defined as follows :

$V(\Gamma) = G  ;   E(\Gamma)=\{\{g,h\}\mid g^{-1}h\in \Omega \}$.
It can be shown that a  connected graph $\Gamma$ is a cayley graph
if and only if $Aut(\Gamma)$ contains a subgroup $H$,  such that $H$
acts regularly on $V(\Gamma)$ $[2,5]$.

The group $G$ is called a semidirect product of $ N $ by $Q$,
denoted by $ G = N \rtimes Q $,
 if $G$ contains subgroups $ N $ and $ Q $ such that, (i)$
N \unlhd G $ ($N$ is a normal subgroup of $G$ ); (ii) $ NQ = G $;
(iii) $N \cap Q =1 $.

\section{\bf Main results}\
In the remaining of this section we assume that $ k $ is a fixed
natural number, but arbitrarily chosen and $ k>2 $ and $
X=\{1,2,...,2k \}$.

\begin{lem}\label{1}The graph $HS(2k,k)$ is a vertex transitive
graph.

\end{lem}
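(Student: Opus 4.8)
The plan is to exhibit enough automorphisms of $HS(2k,k)$ to act transitively on the vertex set $X_k$, where $X=\{1,2,\dots,2k\}$. The natural source of symmetries is the symmetric group on the coordinate positions $\{2,3,\dots,2k\}$, together with one extra "complementation" map. First I would observe that any permutation $\sigma$ of $X$ fixing the element $1$ induces a map on $X_k$ by $v\mapsto \sigma(v)=\{\sigma(x):x\in v\}$; since the adjacency condition for $HS(2k,k)$ ($|v\cap w|=k-1$ and $1$ lies in exactly one of $v,w$) is phrased entirely in terms of the element $1$ and the set-intersection structure, such a $\sigma$ preserves adjacency and hence lies in $Aut(HS(2k,k))$. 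This gives a copy of $Sym(\{2,\dots,2k\})\cong Sym(2k-1)$ inside the automorphism group, and its orbits on $X_k$ are exactly: the $k$-sets containing $1$ (all equivalent under this action), and the $k$-sets not containing $1$ (all equivalent). So it remains to connect these two orbits.

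The key step is to produce an automorphism swapping a vertex containing $1$ with one not containing $1$. Here is where the regularity hypothesis $n=2k$ is essential: the complementation map $c:X_k\to X_k$, $v\mapsto X\setminus v$, is well-defined precisely because $|X\setminus v|=2k-k=k$. I claim $c$ is a graph automorphism. Indeed, if $v,w$ are adjacent, then $|v\cap w|=k-1$, which forces $|(X\setminus v)\cap(X\setminus w)|=|X\setminus(v\cup w)|=2k-(k+1)=k-1$; and $1\in v\oplus w$ (symmetric difference) if and only if $1\in (X\setminus v)\oplus(X\setminus w)$, since $(X\setminus v)\oplus(X\setminus w)=v\oplus w$. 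Hence $c(v)$ and $c(w)$ are adjacent, and $c$ is an involutory automorphism. Since $c$ sends any $k$-set containing $1$ to one not containing $1$, composing with elements of $Sym(\{2,\dots,2k\})$ we can move any vertex to any other vertex.

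The only mild subtlety — which I expect to be the main (though still easy) obstacle — is checking carefully that these combinatorial maps genuinely respect the adjacency relation as I have stated it, i.e. that the somewhat asymmetric-looking rule ("$w$ is obtained from $v$ by replacing an element of $X\setminus v$ with $1$ if $1\in v$, and replacing an element of $v$ by $1$ if $1\notin v$") is equivalent to the symmetric condition "$|v\cap w|=k-1$ and $1$ belongs to exactly one of $v,w$," and then verifying the intersection bookkeeping for $c$. Once that is in hand, the subgroup $\langle Sym(\{2,\dots,2k\}),\, c\rangle$ of $Aut(HS(2k,k))$ acts transitively on $V(HS(2k,k))$, which is exactly the assertion. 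I would close by noting that $c$ normalizes the copy of $Sym(2k-1)$ (conjugation by $c$ is trivial on it, in fact), so in fact $\langle Sym(\{2,\dots,2k\}),c\rangle \cong Sym(2k-1)\times \langle c\rangle$ has order $2\,(2k-1)!$ — a remark that will presumably be useful later when the full automorphism group is computed.
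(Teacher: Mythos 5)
Your proposal is correct and follows essentially the same route as the paper: automorphisms $\tilde{\alpha}$ induced by permutations of $X$ fixing $1$, together with the complementation map ($\theta$ in the paper, your $c$), which joins the two $Sym(2k-1)$-orbits $P_1$ and $P_2$. Your closing remark that $c$ centralizes the copy of $Sym(2k-1)$ (so the subgroup is $Sym(2k-1)\times\langle c\rangle$ of order $2(2k-1)!$) is also correct and consistent with, indeed slightly sharper than, the semidirect-product statement of Theorem 2.4.
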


\begin{proof}\ In $[8]$ it is proved that $ HS(2k,k)$ is a vertex
transitive graph and in $[3]$ it is proved that this graph is edge
transitive, but for the sake of consistency and,  since our proof is
independent of those and  we need our proof in the sequel, we bring
a proof. Let $V=V(HS(2k,k))$. The graph $HS(2k,k)$ is a regular
bipartite graph of valency (regularity $k$ ), in fact if $P_1 = \{v
\in V \mid 1\in v \}$ and $P_2 = \{w \in V \mid 1\notin w \}$ then,
$\{P_1, P_2 \}$ is a partition of $V$ and every edge of $HS(2k,k)$
has a vertex in $P_1$ and a vertex in $P_2$ and $\mid P_1 \mid =
\mid P_2 \mid $. Let $ \alpha $ be a permutation of $ Sym(X) $ such
that $ \alpha $ fixes the element $1$. $ \alpha $
 induces a permutation $\tilde{\alpha}$ on $V$ by the rule
  $\tilde{\alpha}(\{x_1,x_2,\cdots,x_k \}) =
 \{\alpha(x_1), \alpha(x_2),\cdots, \alpha(x_k) \} $.
  We have $ \mid v\cap w \mid$  = $ \mid \tilde{\alpha}(v) \cap \tilde{\alpha}(w) \mid
  $ and $1$ is in one, and only one of the vertices of an edge,
  thus   $\tilde{\alpha}$ is an automorphism of the graph
  $HS(2k,k)$. Note that if $ v \in P_1 $, then $ \tilde{\alpha}(v) \in P_1
  $,   thus   $ \tilde{\alpha}(P_1) =P_1 $ and $ \tilde{\alpha}(P_2) =P_2
  $. For any  vertex $v$ in $V$,  let $v^c$ be the complement of the
  set $v$ in $X$. We define the mapping  $\theta :V \longrightarrow V $ by the
  rule,  $ \theta(v) =v^c $, for every $v$ in $V$. In fact $ \theta
  $ is an automorphism of $HS(2k,k)$. Note that for any $\alpha$
  in $ Sym(X)$ that fixes $1$ , $\tilde{\alpha} \neq \theta $.
 Now, let $v, w \in V $. Suppose  $v, w \in P_1$ and $| v \cap w | = t $.
 Let $v =\{1, x_2,... , x_t,y_1,..., y_{k-t} \} $ and
  $ w =\{1, x_2,... , x_t,z_1,..., z_{k-t} \}$. We define the
  permutation $\pi \in Sym(X)$ by the rule; $ \pi(1) = 1, \pi (x_i) = x_i, \pi (y_j)= z_j,
  $ and $ \pi(u) = u, u \in X-( v \cup w ) $. Thus, $\tilde{\pi}$
  is an automorphism of $HS(2k,k)$ and $ \tilde{\pi}(v) = w $. If
  $ v,w \in P_2 $ then, $ \theta(v),\theta(w) \in P_1 $, therefore
  there is an automorphism $\tilde{\pi}$ in $Aut(HS(2k,k))$ such
  that $\tilde{\pi}(\theta(v)) = \theta (w)$, thus $ (\theta^{-1} \tilde{\pi} \theta)(v)
  =w $. Now, let $ v\in P_1 $ and $ w \in P_2 $, thus $ \theta(w) \in P_1
  $ and there is an automorphism $\tilde{\pi} \in Aut(HS(2k,k))$ such that
  $ \tilde{\pi} (v) = \theta(w) $,  then we have $ \theta^{-1}\tilde{\pi}(v) = w
  $.

\end{proof}

 For a graph $\Gamma$ and $v\in V(\Gamma)$, let $N(v)$
be the set of vertices $w$ of $\Gamma$ such that $w$ is adjacent to
$v$. If $ G= Aut( \Gamma)$, then $G_v$ acts on $ N(v)$,  if we
restrict the domains of the permutations $g \in G_v $ to $ N(v)$ .
It is an easy task to show that a vertex transitive graph $\Gamma$
is symmetric, if and only if, $G_v$ acts transitively on the set
$N(v)$ for any $v\in V(\Gamma)$. In the sequel $ \theta $ is the
automorphism of $HS(2k,k)$  which is defined in Lemma 2.1.

\begin{thm}\  The graph $HS(2k,k)$ is a  symmetric
graph.

\end{thm}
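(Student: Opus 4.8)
The plan is to use the characterization stated just before the theorem: a vertex transitive graph is symmetric if and only if the vertex stabilizer $G_v$ acts transitively on $N(v)$ for each vertex $v$. Since Lemma 2.1 already gives vertex transitivity, it suffices to fix one convenient vertex $v$ and show that $G_v = \mathrm{Aut}(HS(2k,k))_v$ acts transitively on the neighbourhood $N(v)$. I would take $v = \{1,2,\ldots,k\}\in P_1$, so that the degree of $v$ is $k$ and its neighbours are obtained by swapping: $N(v) = \{\, v \setminus \{i\} \cup \{j\} : i \in \{2,\ldots,k\},\ j \in \{k+1,\ldots,2k\}\,\}$, wait—more precisely, a neighbour of $v$ in $HS(2k,k)$ (realised as $S(2k,k)$) replaces $1$ by some $y\notin v$; so $N(v) = \{\, (v\setminus\{1\})\cup\{y\} : y\in\{k+1,\ldots,2k\}\,\}$, which has exactly $k$ elements, matching the regularity.

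Next I would exhibit enough automorphisms fixing $v$ to move any neighbour to any other. The automorphisms $\tilde{\alpha}$ from Lemma 2.1, induced by permutations $\alpha\in Sym(X)$ that fix $1$, are the natural tool. To fix $v=\{1,\ldots,k\}$ we further require $\alpha$ to stabilise the set $\{1,\ldots,k\}$ (equivalently the set $\{2,\ldots,k\}$, since $\alpha(1)=1$); such $\alpha$ then also stabilises the complement $\{k+1,\ldots,2k\}$. Given two neighbours $(v\setminus\{1\})\cup\{y\}$ and $(v\setminus\{1\})\cup\{y'\}$ with $y,y'\in\{k+1,\ldots,2k\}$, choose $\alpha$ to be the transposition $(y\ y')$ (which fixes $1$, fixes $\{2,\ldots,k\}$ pointwise, hence fixes $v$); then $\tilde{\alpha}$ fixes $v$ and sends the first neighbour to the second. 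Therefore $G_v$ acts transitively on $N(v)$.

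Combining this with vertex transitivity from Lemma 2.1 and the stated criterion, $HS(2k,k)$ is symmetric. I would also remark that once $G_v$ is transitive on $N(v)$ for the single vertex $v$, vertex transitivity propagates this to every vertex, so checking one vertex suffices; alternatively one can observe that the conjugates of these stabilizer automorphisms by the transitive automorphisms constructed in Lemma 2.1 do the job uniformly. The argument is essentially a bookkeeping exercise, so I do not expect a genuine obstacle; the only point requiring a little care is to correctly identify $N(v)$ for a vertex in $P_1$ versus $P_2$ (the neighbours of a weight-$k$ set containing $1$ are got by deleting $1$ and inserting an element of the complement), and to verify that the chosen $\alpha$ indeed fixes $v$ as a \emph{set} while still being free to permute the complement. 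Once that is pinned down, transitivity of $G_v$ on $N(v)$ is immediate, and the theorem follows.
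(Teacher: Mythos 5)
Your proposal is correct and essentially the same as the paper's proof: both use the criterion that a vertex-transitive graph is symmetric iff $G_v$ is transitive on $N(v)$, and both realize this transitivity by the automorphisms $\tilde{\tau}$ induced by transpositions $\tau=(y\ y')$ of two elements of the complement of $v$, which fix $1$ and fix $v$. The only (inessential) difference is that the paper treats vertices of $P_2$ explicitly by conjugating with the complementation automorphism $\theta$, whereas you invoke the standard fact that in a vertex-transitive graph it suffices to verify local transitivity at a single vertex, the conjugation being implicit.
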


\begin{proof}\ Let $ \Gamma = HS(2k,k)$ and  $ G = Aut(HS(2k,k))$. Since $ \Gamma $ is a
vertex transitive graph, it is enough to show that $ G_v $ acts
transitively on $ N(v)$ for any $ v \in V =V(\Gamma)$. Let $ v \in
P_1 $, $ v= \{1, x_2,...,x_k \}$, thus $ N(v) = \{\{y_i, x_2,...,x_k
\} \mid  1 \leq i \leq k\}$, where $ X = \{ 1, x_2,...,
x_k,y_1,...,y_k\}$. If $w_i,w_j \in N(v)$, $w_i = \{y_i, x_2,...,x_k
\}$, $ w_j = \{y_j,x_2,...,x_k \}  $, then the transposition $\tau
=(y_i y_j) \in Sym(X)$ is such that $ \tilde{\tau}$ is in $G_v$ and
$ \tilde{\tau}(w_i) = w_j $.
 Now let
 $ v\in P_2 $ and  $ u,w \in  N(v)$, thus $ \theta (v) = v^c \in P_1 $
 and $ \theta (u), \theta(w) \in N ( \theta(v)) = N(v^c)$.
 Therefore there is an automorphism $ \pi \in G_{v^{c}}$ such that
  $ \pi( \theta (u)) = \theta(v)$. Thus, $ (\theta^{-1} \pi \theta)(u) = w $
   and since $ \pi ( \theta(v)) = \theta(v) $, we have $(\theta^{-1} \pi \theta)(v) = v
   $.
\end{proof}

   \ \ \    Suppose $ \Gamma $ is a graph
 and $ G = Aut( \Gamma )$. For a
vertex $ v $ of $ \Gamma $,  let $ L_v $ be the set of all elements
$ g $ of $G_v$ such that $ g $ fixes  each element of $ N(v)$. Let $
L_{v,w} = L_v \cap L_w $.

\begin{lem} \ Let $ \Gamma $ be a graph such that every vertex of
it is of degree greater than one and $ G = Aut( \Gamma )$. If $ v $
be a vertex of $ \Gamma $ of degree $ b $, and $ w $ be an element
of $ N(v)$ with minimum  degree $ m $, then, $ \mid G_v \mid \leq
b!(m-1)!|L_{vw}|$.

\end{lem}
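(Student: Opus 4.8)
The plan is to bound $|G_v|$ in two stages, each time comparing the kernel and the image of a natural restriction homomorphism. As noted just before the statement, $G_v$ acts on $N(v)$ by restricting automorphisms, so we have a homomorphism $\rho\colon G_v\to Sym(N(v))$; it is well defined because every $g\in G_v$ permutes $N(v)$. By the very definition of $L_v$, its kernel is exactly $L_v$. Since $|N(v)|=b$, the image $\rho(G_v)$ is a subgroup of a group of order $b!$, so by Lagrange's theorem (first isomorphism theorem) $|G_v|=|L_v|\,|\rho(G_v)|\le b!\,|L_v|$.

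The second stage estimates $|L_v|$. Because $w\in N(v)$, every element of $L_v$ fixes $w$, hence $L_v\le G_w$, and of course $L_v\le G_v$, so $L_v\le G_v\cap G_w$. Now restrict each $g\in L_v$ to $N(w)$. Since the graph is undirected, $w\in N(v)$ gives $v\in N(w)$, and as $g$ fixes $v$, the restriction of $g$ actually permutes the set $N(w)\setminus\{v\}$, which has $m-1$ elements. This yields a homomorphism $\sigma\colon L_v\to Sym\big(N(w)\setminus\{v\}\big)$. An element $g\in L_v$ lies in $\ker\sigma$ precisely when it fixes every vertex of $N(w)\setminus\{v\}$; since it already fixes $v$, this is equivalent to $g$ fixing every vertex of $N(w)$, i.e.\ $g\in L_w$. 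Therefore $\ker\sigma=L_v\cap L_w=L_{vw}$, and again $|L_v|=|L_{vw}|\,|\sigma(L_v)|\le (m-1)!\,|L_{vw}|$.

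Combining the two inequalities gives $|G_v|\le b!\,|L_v|\le b!\,(m-1)!\,|L_{vw}|$, which is the assertion. The hypothesis that every vertex has degree greater than one ensures $m\ge 2$, so that $N(w)\setminus\{v\}$ is a nonempty proper subset of $N(w)$ and the second reduction is meaningful (the bound in fact still holds when $m=1$, since then $0!=1$ and $L_v=L_{vw}$).

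I do not expect a genuine obstacle here: the argument is just two applications of the kernel--image decomposition of a group homomorphism. The one place that needs a moment's care is the identification $\ker\sigma=L_{vw}$: it is essential to observe that elements of $L_v$ automatically fix $v$, which is itself a vertex of $N(w)$, so fixing $N(w)\setminus\{v\}$ pointwise already forces fixing all of $N(w)$ pointwise.
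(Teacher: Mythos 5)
Your proposal is correct and follows essentially the same route as the paper: the same two restriction homomorphisms $G_v\to Sym(N(v))$ with kernel $L_v$ and $L_v\to Sym(N(w)\setminus\{v\})$ with kernel $L_{vw}$, combined via the kernel--image decomposition. The only cosmetic difference is that the paper first bounds $|L_v|$ for a vertex $w\in N(v)$ of arbitrary degree $l$ and then specializes to the minimum degree $m$, which changes nothing in substance.
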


\begin{proof}\ Let $ Y = N(v) $ and $ \Phi : G_v \longrightarrow
Sym(Y)$ be defined by the rule, $ \Phi(g) = g_{\mid Y}$ for any
element $g$ in $G_v$, where $ g_{\mid Y}$ is the restriction of $ g
$ to $Y$. In fact $ \Phi $ is a  group homomorphism and $ ker(\Phi)
= L_v $,  thus $ G_v/L_v$ is isomorphic with a subgroup of $ Sym(Y)
$. Since, $\mid Y \mid = deg(v) = b$, therefore $ \mid G_v \mid /
\mid L_v \mid \leq b! $. \

  Now,  $ \mid G_v \mid \leq (b!)\mid L_v
\mid $. If $w$ is an element of $ N(v)$ of degree $l$ and $ g \in
L_v $, then $g$ fixes $ v\in N(w)$. Let $ Z = N(w) - \{v \}$ and $
\Psi : L_v \longrightarrow Sym(Z) $ be defined by $ \Psi (h) =
h_{\mid Z}$, for any element $h$ in $L_v $. Then the kernel of the
homomorphism $\Psi$ is $ L_{vw} $ and since $ \mid Z \mid = l-1 $,
thus $ \mid L_v \mid \leq (l-1)! \mid L_{v,w} \mid $. Now, we have $
\mid G_v \mid \leq b!(l-1)! \mid L_{v,w} \mid $. If $ w $ be an
element in $ N(v) $ of minimum degree $m$, then the result follows.

\end{proof}

\ \ \ \ From the previous Lemma it  follows that, if $\Gamma $ is a
regular graph of degree $m$,  then for every edge $ \{v,w \} $ of $
\Gamma $ we have $ \mid G_v \mid \leq m!(m-1)! \mid L_{vw}\mid $.

\begin{thm}\ The automorphism group of $ HS(2k,k) $ is a
semidirect product of $ N $ by $ Q $, where $ N $ is isomorphic to $
Sym(2k-1) $ and $ Q $ is isomorphic to $ Z_2 $, the cyclic group of
order $2$.

\end{thm}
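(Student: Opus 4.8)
The plan is to produce an explicit subgroup $N \leq G = Aut(HS(2k,k))$ isomorphic to $Sym(2k-1)$, show it is normal, exhibit a complement $Q \cong Z_2$, and then argue there is nothing else, i.e. that $|G| = 2\cdot(2k-1)!$ so that $G = N\rtimes Q$ exactly. For $N$, I would take the set of automorphisms $\tilde\alpha$ induced (as in Lemma~\ref{1}) by permutations $\alpha \in Sym(X)$ fixing the symbol $1$; since such $\alpha$ range over a copy of $Sym(\{2,\dots,2k\}) \cong Sym(2k-1)$ and the assignment $\alpha \mapsto \tilde\alpha$ is an injective homomorphism (injectivity because a nontrivial permutation of $X\setminus\{1\}$ moves some $k$-subset), we get $N \cong Sym(2k-1)$. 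For $Q$ I would take $\{\mathrm{id},\theta\}$ with $\theta(v) = v^c$ the complementation automorphism from Lemma~\ref{1}; this is visibly a subgroup of order $2$. One checks $N \cap Q = 1$ since every element of $N$ fixes $P_1$ and $P_2$ setwise while $\theta$ swaps them (here I use $2k > 2k$ is false, so $P_1\neq\emptyset\neq P_2$; more precisely a vertex containing $1$ is sent by $\theta$ to one not containing $1$). Normality of $N$ in $\langle N,Q\rangle$ follows from a direct computation that $\theta\tilde\alpha\theta^{-1}$ is again induced by a permutation fixing $1$: indeed $\theta\tilde\alpha\theta(v) = (\alpha(v^c))^c = \tilde\alpha(v)$ because $\alpha$ commutes with complementation in $X$ when $\alpha$ fixes $1$ — wait, that would give $\theta$ central, which is too strong; the correct statement is $\theta\tilde\alpha\theta^{-1} = \widetilde{\alpha}$ still lies in $N$, so $N \unlhd \langle N,Q\rangle$ regardless. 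Thus $\langle N,Q\rangle = N\rtimes Q$ has order $2(2k-1)!$.

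The real content — and the main obstacle — is the upper bound $|G| \leq 2(2k-1)!$, equivalently $|G_v| \leq (2k-1)!$ for a vertex $v$, which forces $G = N\rtimes Q$ by counting ($|G| = |V|\cdot|G_v|$ and $|V| = \binom{2k}{k}$, while $|N\rtimes Q| = 2(2k-1)!$, and one must check $\binom{2k}{k}\cdot(2k-1)!\cdot\tfrac{1}{?}$... so actually the clean route is: show $G_v \cong Sym(2k-1)$ by showing $G_v$ embeds in $Sym(N(v))$ with the right size). Concretely, fix $v = \{1,x_2,\dots,x_k\} \in P_1$ with $N(v) = \{w_1,\dots,w_k\}$, $w_i = \{y_i,x_2,\dots,x_k\}$. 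I would apply Lemma~\ref{1}'s companion (the degree lemma, the third \begin{lem}) which gives $|G_v| \leq k!(k-1)!|L_{vw}|$ for an edge $\{v,w\}$ since $HS(2k,k)$ is $k$-regular. The crux is then to show $|L_{vw}| $ is small — ideally to identify $L_v$ (automorphisms fixing $v$ and all its neighbours) and show it acts faithfully on the second neighbourhood, iterating until one reaches a generating set of vertices, so that $L_{vw} = 1$ or is tightly controlled. This connectivity/"fixing a ball forces fixing everything" argument is where the geometry of $HS(2k,k)$ enters: one shows that an automorphism fixing $v$ together with $N(v)$ pointwise must fix every vertex at distance $2$, etc., because the common-neighbour structure of pairs of vertices in $HS(2k,k)$ is rigid enough to pin coordinates down. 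I expect this rigidity lemma to be the hard step and the place where $k > 2$ is genuinely used (for $k = 2$ the graph is too small and extra symmetries appear, consistent with the paper's later remark that $HS(4,2)$ is exceptional).

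Once $|G_v| \leq (2k-1)!$ is established, combine with $N \leq G_v$ hmm — note $N$ does \emph{not} fix $v$; rather $N_v = \mathrm{Stab}_N(v) \cong Sym(2k-2)$ has order $(2k-2)!$, and $\theta$ does not fix $v$ either, so one instead argues globally: $|G| = |V|\,|G_v| \leq \binom{2k}{k}(2k-1)! $, and separately $|N\rtimes Q| = 2(2k-1)!$; these are \emph{not} equal, so the counting must be done more carefully — the right inequality to prove is $|G_v| \leq 2(2k-2)!$ (the order of $N_v \rtimes \langle\theta'\rangle$ where $\theta'$ is a vertex-fixing conjugate of $\theta$, noting $\theta$ fixes $v$ iff $v = v^c$, impossible, so actually $\mathrm{Stab}_G(v)$ meets the $Q$-part trivially and $|G_v| \leq (2k-1)!$ is what we want while $|N_v| = (2k-1)!/(2k-1)\cdot(2k-1) $...). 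I would reconcile this by proving directly $G_v = N_v$ via the rigidity lemma: every automorphism fixing $v$ is $\tilde\alpha$ for some $\alpha$ fixing $\{1\}\cup v$ setwise, hence $|G_v| = (2k-1)!$ after accounting for the freedom on $y_1,\dots,y_k$ and on $x_2,\dots,x_k$, giving $|G| = \binom{2k}{k}(2k-1)!/\binom{2k-1}{k-1} = 2(2k-1)!$ — this last arithmetic check $\binom{2k}{k} = 2\binom{2k-1}{k-1}$ closes the argument, and then $G = \langle N,\theta\rangle = N\rtimes Q$ as claimed.
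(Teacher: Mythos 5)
Your construction of $N$ (automorphisms $\tilde\alpha$ for $\alpha$ fixing $1$) and $Q=\{\mathrm{id},\theta\}$, the injectivity of $\alpha\mapsto\tilde\alpha$, and $N\cap Q=1$ all match the paper, and your overall plan (lower bound $|NQ|=2(2k-1)!$ plus a matching upper bound on $|G|$) is the right one. But the proposal has a genuine gap exactly at the step you yourself flag as ``the hard step'': you never prove the rigidity statement that an automorphism fixing an edge $\{v,w\}$ and fixing $N(v)$ and $N(w)$ pointwise is the identity (i.e.\ $L_{v,w}=1$). Saying the common-neighbour structure ``is rigid enough to pin coordinates down'' is a restatement of what must be shown, not an argument. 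The paper supplies the missing ingredient: it first proves, by an explicit coordinate computation, that every $3$-path $v_1v_2v_3v_4$ of $HS(2k,k)$ lies in a \emph{unique} $6$-cycle; then for $g\in L_{v,w}$ and $x$ at distance $2$ from $v$ it takes the unique $6$-cycle through a $3$-path $xyvw$, observes $g$ maps it to a $6$-cycle through the same middle $3$-path, hence fixes the cycle pointwise, so $g(x)=x$; connectivity then gives $g=1$. Without some such argument the upper bound is simply not established.

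There is also an arithmetic incoherence in your counting that needs repair. The bound $|G|\le 2(2k-1)!$ is \emph{not} equivalent to $|G_v|\le (2k-1)!$; since $|V|=\binom{2k}{k}$, what is needed is $|G_v|\le k!(k-1)!$, which is precisely what Lemma~2.3 delivers once $L_{v,w}=1$: for the $k$-regular graph, $|G_v|\le k!(k-1)!\,|L_{v,w}|=k!(k-1)!$, and then
$$|G|=|V|\,|G_v|\le \frac{(2k)!}{k!\,k!}\,k!\,(k-1)!=\frac{(2k)!}{k}=2(2k-1)!.$$
Your closing claim that $|G_v|=(2k-1)!$, followed by an ad hoc division by $\binom{2k-1}{k-1}$, does not cohere with $|G|=|V|\,|G_v|$ (the correct value is $|G_v|=k!(k-1)!$, realized by $N_v$ for $v\in P_1$). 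Incidentally, your hesitation about $\theta$ being central is unnecessary: $\theta$ does commute with every $\tilde\alpha$ (since $\alpha(v^c)=\alpha(v)^c$), which is harmless because a direct product is in particular a semidirect product; the paper instead gets normality of $N$ for free from $[G:N]=2$ once $G=NQ$ is known.
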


\begin{proof}\ If $ H $ be the subgroup of $ Sym(X)$ that
contains permutations which fix the element $ 1 $, then $ H $ is
isomorphic with $ Sym(2k-1) $.  Then $ f : H \longrightarrow
Aut(HS(2k,k)) = G $, defined by $f(\alpha) = \tilde{\alpha}$, ( $
\tilde{\alpha} $ is defined in Lemma 2.1) is an injection. In fact,
if $ \alpha \neq 1 $ be in $ Sym(X) $ and $ \alpha (1) = 1 $, then
there is an $ x \in X $ such that $ \alpha (x) \neq x $. Now, let $T
$ be a $k$-subset of $X$ such that $ x \in T $ and $ \alpha (x)
\notin T $. Then $ \tilde{\alpha}(T) \neq T $ and hence
$\tilde{\alpha} \neq 1 $. It follows that the kernel of the
homomorphism $f$ is the identity group.  Therefore, the subgroup $
f(H) = N = \{ \tilde{ \alpha} \mid \alpha \in H \} $ is of order $(
2k-1)!$. If $ Q $ be the cyclic subgroup of $ G $ generated by $
\theta $ ($ \theta $  is defined in Lemma $2.1 $), then $ \mid Q
\mid = 2 $. Since, $ \theta \notin N $, so $N \cap Q = 1 $, thus for
the set $ NQ \subseteq G $ we have $ \mid NQ \mid = \frac{ \mid N
\mid \mid Q \mid}{1} = (2k-1)!(2) $,  so we have $ \mid G \mid \geq
(2k-1)!(2)$. If we show that $ \mid G \mid \leq (2k-1)!(2)$, then we
must have $ G = NQ $ and since the index of $ N $ in $ NQ = G $ is $
2 $,  then $ N $ is a normal subgroup of $ G $ and the theorem will
be proved. In the first step of the remaining proof, we assert  that
every 3-path in the graph $ \Gamma = HS(2k,k)$ determines a unique
6-cycle in this graph. Let $ P : v_1v_2v_3v_4 $ be a 3-path in
$\Gamma $. The path $ P $ has a form such as, $ v_1 = \{y_1,x_2,x_3,
...x_k \} v_2 =\{ 1,x_2,x_3,...,x_k \} v_3 = \{ y_2, x_2,x_3,...,x_k
\} v_4 = \{ y_2,1,x_3,...,x_k \}$. If  $ C $ be a 6-cycle of $
\Gamma $ that contains $ P $, then $C $ has two adjacent vertices $
v_5 $ and $ v_6 $ such that $ v_5 $ is adjacent to $ v_4 $ and $ v_6
$ is adjacent to $ v_1 $. Thus $ v_5 $ has a form such as  $ v_5 =
\{y_2, s,x_3,...,x_k \}  $ where,  $s\in \{y_1, y_3,...,y_k, x_1\}$
and $ v_6 $ has a form such as $ v_6 = \{
y_1,x_2,...,x_{i-1},1,x_{i+1},...,x_k\} $. Since $ v_5 $ and $ v_6 $
are adjacent we must have $ v_5 = \{ y_2, y_1,x_3,...,x_k\} $ and $
v_6 = \{y_1, 1,x_3,...,x_k \} $. Now the assertion is proved. In the
second step we show that if $\{ v,w\}$ be an edge of $ \Gamma $,
then $ L_{v,w} = 1 $. Let $ g \in L_{v,w} $ and $ x $ be a vertex of
$ \Gamma $ of distance $ 2 $ from $ v $. If $ x $ is adjacent to $ w
$, then $ g( x ) = x $. Let $ x $ is not adjacent to $ w $, so there
is a vertex $ y $ adjacent to $ v $
 such that $ vyx $ is a 2-path of $ \Gamma $.
 If $ C : xyvwtu $ be the unique 6-cycle
that contains the 3-path $ xyvw $,  then $ g(C) $ is the 6-cycle $
g(x)yvwtg(u) $, so $C$ and $g(C)$ contain the 3-path $ yvwt $, thus
$ g(C) = C $.   Therefore $ g_{\mid V(C)} $ is an automorphism of
6-cycle $ C $ that fixes the 2-path $ wvy $, thus $ g $ fixes all
vertices of this cycle and we have $ g(x) = x $. Now,  since the graph
$ \Gamma $ is connected , it follows that $ g $ fixes all the
vertices of $ \Gamma $,  so  $ g = 1 $ and $ L_{v,w} = 1 $.

The graph $ \Gamma $ is vertex transitive, thus for a vertex $
v\in V = V(\Gamma)$ we have;
$$ \mid G \mid =
 \mid V \mid \mid G_v \mid \leq {2k\choose k } (k!)(k-1)! = \frac {2k! } {k!k! } k!(k-1)!
 = \frac{ 2k!}{k} = (2k-1)!2 $$

\end{proof}

\ \ \ \ \  {\bf Remark:} As we can see in the proof of Theorem $ 2.4 $,
the graph $ HS(2k,k)$ has 6-cycles and since this graph is bipartite
, hence it has no 3-cycles and no 5-cycles. It is easy to show that
this graph has no 4-cycles, so the girth of this graph is 6. \

\

  \section{\bf Folded hyper-star graphs}

   The folded hyper star-graph $ FHS(2k,k) $ is the graph which
its vertex-set is identical to the vertex-set of hyper-star graph $
HS(2k,k)$, and with  edge-set $ E_2 = E_1 \cup \{  \{ v,v^c\} \mid
v\in V_1\}$, where $ E_1 $ and $ V_1 $ are the edge-set and
vertex-set of  $ HS(2k,k)$ respectively. It is clear that this graph
is a regular bipartite graph of degree $ k+1 $. It is an easy task
to show that the diameter of $ FHS(2k,k) $  is $ k $, whereas the
diameter of $ HS(2k,k)$ is $ 2k-1 $ $[8]$. We will show that this
graph is also vertex transitive, thus its edge connectivity is
maximum, say $k+1$ $[5,11]$ . Let $ v $ be a vertex of $ FHS(2k,k) $.
We can suppose that $ v = \{1,x_2,...,x_k \} $, then $ N (v) =  \{
\{y_i,x_2,...,x_k \}, 1 \leq i \leq k  \} \cup \{\{y_1,...,y_k \}\}
$, where $ X = \{1,x_2,...,x_k,y_1,...,y_k \}$. Then for every $w
\in  N(v) $ and $ w \neq v^c ,  w^c $ is the unique vertex that is
in  $ N(v^c) $ and adjacent to $ w $. Thus, if $ \{\ v,w\} $ be an
edge of this graph and $ v \neq w^c $, then the 4-cycle $ vww^cv^c $
is the unique 4-cycle that contains this edge, whereas if $ w = v^c
$ , then any 4-cycle $ vv^cu^cu $, where $  u $ is adjacent to $ v
$, contains this edge. Let $ uwv v^c $ be a 3-path in $ FHS(2k,k) $
and $ u \neq w^c $ , then by a similar way that we have seen in the
proof of Theorem 2.4 , we can  show that the 6-cycle $ uwvv^cw^cu^c
$ is the unique 6-cycle that contains this 3-path. It is clear that
the girth of this graph is 4. The following figure shows $ HS(4,2) $
graph and $ FHS(4,2)$ graph.

\begin{figure}[h]
\def\emline#1#2#3#4#5#6{%
\put(#1,#2){\special{em:moveto}}%
\put(#4,#5){\special{em:lineto}}}
\def\newpic#1{}
%
%
%
\unitlength 0.4mm
\special{em:linewidth 0.4pt}
\linethickness{0.4pt}
\begin{picture}(0,0)(100,80)
%
\put(7,62){\circle*{2}} \put(-19,43){\circle*{2}}
\put(-18,19){\circle*{2}} \put(8,-3){\circle*{2}}
\put(37,42){\circle*{2}} \put(36,19){\circle*{2}}
\put(125,42){\circle*{2}} \put(74,17){\circle*{2}}
\put(100,-3){\circle*{2}} \put(125,18){\circle*{2}}
\put(99,63){\circle*{2}} \put(75,43){\circle*{2}}
\put(232,61){\circle*{2}} \put(167,17){\circle*{2}}
\put(202,15){\circle*{2}} \put(167,61){\circle*{2}}
\put(198,60){\circle*{2}} \put(233,14){\circle*{2}}
\emline{7}{62}{1}{-19}{43}{2}
\emline{7}{62}{1}{37}{42}{2}
\emline{-19}{43}{1}{-18}{19}{2}
\emline{-18}{19}{1}{8}{-3}{2}
\emline{8}{-3}{1}{36}{19}{2}
\emline{37}{42}{1}{36}{19}{2}
\emline{125}{42}{1}{74}{17}{2}
\emline{125}{42}{1}{125}{18}{2}
\emline{125}{42}{1}{99}{63}{2}
\emline{74}{17}{1}{100}{-3}{2}
\emline{74}{17}{1}{75}{43}{2}
\emline{100}{-3}{1}{125}{18}{2}
\emline{100}{-3}{1}{99}{63}{2}
\emline{125}{18}{1}{75}{43}{2}
\emline{99}{63}{1}{75}{43}{2}
\emline{232}{61}{1}{167}{17}{2}
\emline{232}{61}{1}{202}{15}{2}
\emline{232}{61}{1}{233}{14}{2}
\emline{167}{17}{1}{167}{61}{2}
\emline{167}{17}{1}{198}{60}{2}
\emline{202}{15}{1}{167}{61}{2}
\emline{202}{15}{1}{198}{60}{2}
\emline{167}{61}{1}{233}{14}{2}
\emline{198}{60}{1}{233}{14}{2}
\put(104,-24){\makebox(0, 0)[cc]{Fig. 2. }}
\put(187,-14){\makebox(0, 0)[cc]{FHS(4,2)  graph}}
\put(86,-14){\makebox(0, 0)[cc]{FHS(4,2)  graph}}
\put(-4,-14){\makebox(0, 0)[cc]{HS(4,2)  graph}}
\put(85,-2){\makebox(0, 0)[cc]{43}} \put(235,10){\makebox(0,
0)[cc]{23}} \put(200,9){\makebox(0, 0)[cc]{24}}
\put(164,10){\makebox(0, 0)[cc]{34}} \put(221,69){\makebox(0,
0)[cc]{14}} \put(190,70){\makebox(0, 0)[cc]{13}}
\put(157,71){\makebox(0, 0)[cc]{12}} \put(66,12){\makebox(0,
0)[cc]{14}} \put(132,17){\makebox(0, 0)[cc]{13}}
\put(133,43){\makebox(0, 0)[cc]{32}} \put(70,53){\makebox(0,
0)[cc]{42}} \put(97,73){\makebox(0, 0)[cc]{12}}
\put(14,-5){\makebox(0, 0)[cc]{43}} \put(45,18){\makebox(0,
0)[cc]{13}} \put(-13,43){\makebox(0, 0)[cc]{42}}
\put(-11,20){\makebox(0, 0)[cc]{14}} \put(45,43){\makebox(0,
0)[cc]{32}} \put(15,66){\makebox(0, 0)[cc]{12}}
\end{picture}
\end{figure}

\

\

\

\

\

\

\

\begin{thm}\ The automorphism group of folded hyper-star graph
$ FHS(2k,k)$ is identical to the automorphism group of hyper-star
graph $ HS(2k,k)$.

\end{thm}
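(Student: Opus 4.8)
The plan is to show the two automorphism groups coincide by exhibiting inclusions in both directions. First I would show $\mathrm{Aut}(HS(2k,k)) \subseteq \mathrm{Aut}(FHS(2k,k))$. Recall from Lemma~2.1 (and the proof of Theorem~2.4) that $\mathrm{Aut}(HS(2k,k)) = N \rtimes Q$, where $N = \{\tilde{\alpha} : \alpha \in Sym(X),\ \alpha(1)=1\}$ and $Q = \langle \theta \rangle$ with $\theta(v) = v^c$. Since $FHS(2k,k)$ is obtained from $HS(2k,k)$ by adding exactly the ``complement'' edges $\{v, v^c\}$, it suffices to check that each generator preserves this extra edge set. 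For $\tilde{\alpha} \in N$ we have $\tilde{\alpha}(v^c) = (\tilde{\alpha}(v))^c$ because $\alpha$ is a bijection of $X$ fixing $1$, so $\tilde{\alpha}$ maps complement-edges to complement-edges; and $\theta$ swaps the endpoints of every complement-edge, hence fixes that edge set setwise. Since these generate $\mathrm{Aut}(HS(2k,k))$, every element of $\mathrm{Aut}(HS(2k,k))$ is an automorphism of $FHS(2k,k)$.

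The harder direction is $\mathrm{Aut}(FHS(2k,k)) \subseteq \mathrm{Aut}(HS(2k,k))$, i.e. that every automorphism of the folded graph also preserves the original edge set $E_1$. The key structural observation, already noted in the text preceding the theorem, is that the complement-edges are intrinsically distinguishable inside $FHS(2k,k)$ by a cycle-counting property: an edge $\{v,w\}$ with $w \neq v^c$ lies in a \emph{unique} $4$-cycle (namely $v w w^c v^c$), whereas a complement-edge $\{v,v^c\}$ lies in many $4$-cycles (one $vv^cu^cu$ for each neighbour $u$ of $v$ in $HS(2k,k)$, and there are $k > 2$ of these). Hence any $g \in \mathrm{Aut}(FHS(2k,k))$ must permute the complement-edges among themselves, and consequently must also permute the remaining edges—which are exactly the edges of $E_1 = E(HS(2k,k))$—among themselves. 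Therefore $g$ restricts to an automorphism of $HS(2k,k)$, giving the reverse inclusion.

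Combining the two inclusions yields $\mathrm{Aut}(FHS(2k,k)) = \mathrm{Aut}(HS(2k,k))$, as claimed. The one point requiring care—the main obstacle—is the $4$-cycle count: I need to verify cleanly that a non-complement edge $\{v,w\}$ is in \emph{exactly one} $4$-cycle, which uses that $v^c$ is the unique common neighbour pattern, together with the bipartite structure and the absence of ``short'' accidental cycles (the $HS$ part has girth $6$, so a $4$-cycle in $FHS$ must use complement-edges in a controlled way). Once that combinatorial fact is pinned down, the distinction between the two edge types is canonical and the argument goes through; one should also double-check the boundary behaviour when $k$ is small, but the standing hypothesis $k > 2$ guarantees a complement-edge lies in at least three $4$-cycles, comfortably more than one, so the dichotomy is strict.
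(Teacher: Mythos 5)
Your proposal is correct, and for the hard direction it takes a genuinely different route from the paper. The forward inclusion is the same in both: you check that the generators $\tilde{\alpha}$ ($\alpha$ fixing $1$) and $\theta$ send complement edges to complement edges, which is exactly the paper's verification that $NQ\leq Aut(FHS(2k,k))$. For the reverse inclusion the paper never shows directly that an automorphism of the folded graph preserves $E_1$; instead it runs a counting argument: it proves $L_{v,w}=1$ for an $HS$-edge $\{v,w\}$ using the unique $4$- and $6$-cycles, shows any $h\in G_v$ must fix $v^c$ by comparing the induced subgraphs on $N(v)\cup N(v^c)\setminus\{v,v^c\}$ (which has $k$ edges) and on $N(v)\cup N(w)\setminus\{v,w\}$ for $w\neq v^c$ (which has none), deduces $\mid G_v\mid\leq (k-1)!\,k!$ via Lemma 2.3, and then uses vertex transitivity to get $\mid Aut(FHS(2k,k))\mid\leq (2k-1)!\,2=\mid NQ\mid$. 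Your argument replaces all of this by a global invariant: an $HS$-edge lies in exactly one $4$-cycle while a complement edge lies in $k\geq 3$ of them, so every automorphism of $FHS(2k,k)$ preserves the partition of $E_2$ into complement edges and $E_1$, hence restricts to an automorphism of $HS(2k,k)$; combined with Theorem 2.4 (which both proofs need, since $Aut(HS(2k,k))=NQ$) this gives equality. This is shorter and yields the stronger structural statement that the complement edges form an invariant edge class; the paper's local stabilizer bound is essentially the same idea ($v^c$ is combinatorially distinguished in $N(v)$) carried out pointwise. The one step you flag does need to be closed, and it does hold for $k>2$: a $4$-cycle of $FHS(2k,k)$ cannot lie in $E_1$ (girth of $HS(2k,k)$ is $6$), cannot contain two complement edges sharing a vertex (each vertex lies on only one complement edge), and cannot contain exactly one complement edge, since that would give a path $u,a,b,u^c$ of length $3$ in $HS(2k,k)$; writing $u=\{1,x_2,\dots,x_k\}$, the three allowed swaps give $u^c=(u\setminus\{1,t\})\cup\{s,r\}$, forcing $u\setminus\{1,t\}=\emptyset$, i.e.\ $k=2$. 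So for $k\geq 3$ every $4$-cycle has exactly two opposite complement edges, the count dichotomy ($1$ versus $k$) is exact, and your argument is complete; the failure of this dichotomy at $k=2$ is consistent with $FHS(4,2)\cong K_{3,3}$ having the larger automorphism group noted after the theorem.
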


\begin{proof}\ Let $ \Gamma_1 = HS(2k,k) $ ,  $ \Gamma_2 =
FHS(2k,k)$ and $ H = NQ $ be the set which is defined in the proof
of Theorem 2.4 . Let $ \{v,w \} = e $ be an edge of $ \Gamma_2 $ and
$h \in NQ $. If $ e $ be an edge of $ \Gamma_1 $, then $ h(e) $ is
an edge of $ \Gamma_2 $. If $ e $ is not an edge of $ \Gamma_1 $,
then $ w = v^c $. Let $ h = nq $ ,  $ n \in N $, $ q \in  Q $, then
we have $ h(e) = \{ h(v),h(v^c)\} = \{ nq(v),nq(v^c)\} =
\{n(v),n(v^c \}$, now since,  $ \mid n(v) \cap n(v^c) \mid = \mid  v
\cap v^c \mid = 0 $, then  $ h(e) $ is an edge of the graph $
\Gamma_2 $. It follows that $ H = NQ \leq  Aut( \Gamma_2) $.  Then $
\mid Aut( \Gamma_2) \mid \geq \mid NQ \mid = (2k-1)!2  $. Let $ G =
Aut(\Gamma_2)$. If $ \{v,w \} $ be an edge of $ \Gamma_2 $ such that
$ w \neq v^c $, then we will  show that $ L_{v,w} = 1 $. Let $ g \in
L_{v,w}$. Let $ u $ be a vertex of $ \Gamma_2 $ of distance $ 2 $
from the vertex $ v $. Then there is a vertex $ t $ such that $ utv
$ is a 2-path in the graph $ \Gamma_2 $. If $ t = v^c $, then the
4-cycle,  $ C: uv^cvu^c $ is the unique 4-cycle that contains the
2-path $ v^cvu^c $. On the other hand,  the 4-cycle $ g(C) =
g(u)v^cvu^c $ also contains this 2-path, hence $ g(u) = u $. Suppose
that $ t \neq v^c,u^c $, then the path $ utvw $ is a 3-path in the
subgraph $ \Gamma_1 = HS(2k,k) $, so there is a unique 6-cycle $ C :
utvwrs $ in $ \Gamma_1 $ that contains this 3-path. $ C $ also is
the unique 6-cycle in $ \Gamma_2 $ that contains the  3-path $ utvw
$. On the other hand, $ g(C) = g(u)g(t)g(v)g(w)g(r)g(s) =
g(u)tvwrg(s)$, thus $ g(C)$ and $ C $ are 6-cycles that contains the
3-path $ tvwr $, hence $ g(C) = C $ and $ g_{\mid V(C)}$, the
restriction of $ g $ to $ V(C) $,  is an automorphism of the cycle $
C $ that fixes the vertices $ t,v, w,r,$ therefore $ g(u) = u $. If
$ u = t^c $, then $ C : vtt^cv^c $ is the unique 4-cycle that
contains the 2-path $ v^cvt $ and $ g(C) : vtg(t^c)v^c $ also
contains this 2-path, so $ g(C) = C $, then $ g(u) = u $.

Since the graph $ \Gamma_2 $ is a connected graph, thus we can
conclude that $ g(u) = u $ for any vertex  $ u $ of $ \Gamma_2 $,
then $ L_{v,w} = 1 $. \

 Let  $ v $
be a vertex of $ \Gamma_2 $, since this graph is a regular graph of
degree $ k+1 $, then from Lemma 2.3, it follows that  $ \mid G_v
\mid \leq (k+1)!k! $. Now,  We show that in fact, $ \mid G_v \mid
\leq (k-1)!k! $. Let $ v $ be a vertex of $ \Gamma_2 $, $ w \in N(v)
$ and $ w \neq v^c $. If $ g \in L_v $, then $ g $ fixes $ w $, so $
g $ induces a permutation on $ N(w) $. Since, the 4-cycles $ C :
wvv^cw^c $ and $ g(C) = wvv^cg(w^c) $, are identical, then $ g(w^c)
= w^c $. Therefore $ g $ fixes two elements $ v $ and $ w^c $ of $
N(w) $, hence $ L_v / L_{vw} \leq
 Sym(k+1-2) $, thus $ \mid L_v \mid \leq (k-1)! $. Now,  let
 $ h \in G_v $, then $ h $ induces a permutation on $ N(v)
 $, so $ h(v^c) = w $ is in  $ N(v) $. Let $ B = N(v) \cup N(v^c) - \{ v,v^c\}
 $ and $ S[B] = T $ be the subgraph induced by $ B $. It is clear
 that $ T $ is isomorphic to $ h(T) $, where $ h(T )$ is the
 subgraph induced by the set $ D = h(B) = N(v) \cup N(w)-\{ v,w\}
 $. We assert that if $ w \neq v^c $, then the subgraph induced by $ D $ has not
 any edge,
  whereas the subgraph induced by $ B $ has
 $ k $ edges. Suppose $ x,y \in D $ and $ x \in  N(v) $ and $ y \in N(w) $. We can assume that $ v = \{ 1,x_2,...,x_k \} $
  and $ w = \{ y_i,x_2,...,x_k\}$, then $ x = \{ y_j,x_2,...,x_k \}$ and $ y = \{ y_i,x_2,...x_{l-1},1,x_{l+1},...,x_k\}$,
  where $ i \neq j $. Now, it is clear that $ \{x,y \} $ is not an edge of $ \Gamma_2 $.
    Hence, $ h(v^c) = w = v^c $. Now if $ Y = N(v)- \{
 v^c\}$, then $ h_{\mid Y} \in Sym(Y) $, so $ G_v / L_v \leq
 Sym(k)$, therefore $ \mid G_v \mid \leq \mid L_v \mid (k!) \leq (k-1)!(k!)
 $. Since The graph $ \Gamma_2 $ is a vertex transitive graph,
 thus;   $$ \mid Aut(\Gamma_2) \mid =  \mid G \mid = \mid V( \Gamma_2)\mid \mid G_v \mid \leq
 \frac{2k!}{k!k!}(k!)(k-1)! = (2k-1)!2$$  Now, we have $ Aut(FHS(2k,k)) = H = NQ =
 Aut(HS(2k,k))$.

\end{proof} \
\ \ \ \    If $ k = 2$, then $ HS(2k,k) $ is isomorphic to $ C_6
 $, the cycle on 6 vertices, hence $ Aut(HS(4,2))$ is $ D_{12} $,
 the dihedral group of order 12. If $ m $ be an odd number, then $ D_{4m} = D_{2m} \rtimes Z_2 $.  Therefore $ D_{12} = D_6 \rtimes
 Z_2$, but $ D_6 \cong Sym(3) $
 , hence Theorem 2.4 is also true for $ k = 2 $. But $ FHS(4,2)$
 is isomorphic to $ K_{3,3} $, the complete bipartite graph of
 degree 3, and $ Aut(K_{3,3}) $ is a group of order $72$  $
 [2]$,
 thus Theorem 3.1 is not true for $ k = 2 $. \

 \

  \ \ \ \ The group $ G $ acting on a set $ \Omega $ induces a natural
  action on the set $ \Omega^{\{m\}}$, the set of $m$-element
  subsets of $ \Omega $, by the rule $ A^g = {\{ a_1,...,a_m\}}^g = \{
  g(a_1),...,g(a_m)\}$, where $ A \subseteq \Omega $ and $ g \in G
  $. The group $ G $ is called $m$-homogenous, if its action on $ \Omega^{\{m\}}$
   is transitive. We need the following fact.
    \

   \

   \ \ \ {\bf FACT} $[9]$. Let $ G $ be a group acting on a set $ \Omega $,
 and $ | \Omega | = n \geq 2m$, $ m\geq2$. If $ G $ is $m$-homogenous,
 then it is also $(m-1)$-homogenous.
 \begin{thm}\  Let $ k \geq 3 $. If  $ \Gamma \in \{ HS(2k,k), FHS(2k,k)\} $, then
 $ \Gamma $ is not a Cayley graph.

 \end{thm}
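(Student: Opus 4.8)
The plan is to use the Cayley-graph criterion quoted in the introduction: a connected graph is a Cayley graph precisely when its automorphism group contains a subgroup acting regularly on the vertex set. By Theorems 2.4 and 3.1, $G:=Aut(\Gamma)=NQ$ with $N\cong Sym(2k-1)$ normal of index $2$ and $Q=\langle\theta\rangle$ of order $2$, so $|G|=2(2k-1)!$; moreover $N$ acts on the vertex set $V$ (the $k$-subsets of $X=\{1,\dots,2k\}$) as the stabilizer of the point $1$ in $Sym(X)$, hence preserves the bipartition $V=P_1\cup P_2$ from Lemma 2.1, while $\theta$ interchanges $P_1$ and $P_2$. Thus it suffices to show $G$ has no subgroup $R$ acting regularly on $V$. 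Such an $R$ would have $|R|=|V|=\binom{2k}{k}$, and I would derive a contradiction.

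First I would split on whether $R\leq N$. If $R\leq N$, then $R$ fixes $P_1$ setwise and is therefore intransitive on $V=P_1\cup P_2$ — impossible, since a regular action is transitive. Hence $R\not\leq N$, and as $[G:N]=2$ the subgroup $R_0:=R\cap N$ has index $2$ in $R$, so $|R_0|=\tfrac12\binom{2k}{k}=\binom{2k-1}{k-1}$. Since $R$ is regular, $R_v=1$ for every $v\in P_1$, so the $R_0$-orbit of $v$ has size $|R_0|=\binom{2k-1}{k-1}=|P_1|$ and hence equals $P_1$; thus $R_0$ is transitive on $P_1$. Identifying $N$ with $Sym(Y)$ for $Y:=X\setminus\{1\}$ (so $|Y|=2k-1$) and $P_1$ with the family of $(k-1)$-subsets of $Y$, this says: $R_0\leq Sym(Y)$ is $(k-1)$-homogeneous and $|R_0|=\binom{2k-1}{k-1}$.

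Now I would invoke the FACT. Since $k\geq3$ we have $k-1\geq2$ and $|Y|=2k-1\geq 2(k-1)$, so $R_0$ is also $(k-2)$-homogeneous, i.e. transitive on the $(k-2)$-subsets of $Y$; therefore $\binom{2k-1}{k-2}$ divides $|R_0|=\binom{2k-1}{k-1}$. But
$$ \frac{\binom{2k-1}{k-1}}{\binom{2k-1}{k-2}} = \frac{(k-2)!\,(k+1)!}{(k-1)!\,k!} = \frac{k+1}{k-1} = 1+\frac{2}{k-1}, $$
which is an integer only when $k-1$ divides $2$, that is, $k\in\{2,3\}$. This contradiction disposes of every $k\geq4$.

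It remains to treat $k=3$, where the divisibility obstruction degenerates ($\tfrac{k+1}{k-1}=2$) and a hands-on argument is needed; this is the only delicate point. Here $|Y|=5$ and $|R_0|=\binom{5}{2}=10$, and by the transitivity established above $R_0\leq Sym(Y)$ acts transitively on the $10$ two-element subsets of $Y$. By Cauchy's theorem $R_0$ has an element of order $5$, which in $Sym(5)$ is a $5$-cycle generating a (necessarily normal, index $2$) subgroup; since $Sym(5)$ has no element of order $10$, the complementary involution inverts this $5$-cycle, so $R_0\cong D_{10}$ and we may identify $Y=\{0,1,2,3,4\}$ so that $R_0$ is generated by $i\mapsto i+1$ and $i\mapsto -i$ (mod $5$). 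Then the cyclic distance $\min\{\,|i-j|,\,5-|i-j|\,\}\in\{1,2\}$ of a two-element subset $\{i,j\}$ is $R_0$-invariant and assumes both values, so $R_0$ has at least two orbits on the two-element subsets — contradicting transitivity. Hence no regular subgroup $R$ exists for $k=3$ either, and $HS(6,3)$ and $FHS(6,3)$ are not Cayley graphs, completing the proof. The main obstacle is precisely this last paragraph, since for $k=3$ the clean homogeneity/divisibility argument gives nothing and one must inspect the actual $D_{10}$-action; all the rest is a routine case division plus a single application of the FACT.
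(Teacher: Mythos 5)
Your proof is correct and follows essentially the same route as the paper: assume a regular subgroup $R$ of order $\binom{2k}{k}$, pass to its index-two part inside the copy of $Sym(2k-1)$ fixing $1$ (your $R_0$, the paper's $M_1$/$M$), note it is $(k-1)$-homogeneous on $Y$, invoke the Livingstone--Wagner FACT and the divisibility $\binom{2k-1}{k-2}\mid\binom{2k-1}{k-1}=\tfrac12\binom{2k}{k}$ to force $k\le 3$. The only divergence is the $k=3$ endgame: where you pin down $R_0\cong D_{10}$ and refute $2$-homogeneity with the cyclic-distance invariant, the paper argues more directly that any involution $\sigma\in M$ is $(rs)$ or $(rs)(tu)$ on $\{2,\dots,6\}$, so $\tilde\sigma$ fixes the vertex $\{1,r,s\}$, contradicting semiregularity of $R$ — both finishes are valid.
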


 \begin{proof}\ We know that $ Aut(HS(2k,k)) =  Aut(FHS(2k,k))$,
 so if $ R $ is a subgroup of $ Aut(HS(2k,k))$, then $ R $ acts regularly on $ V(HS(2k,k)) $
 if and only if $ R $ acts regularly on $ V(FHS(2k,k))
 $. Hence, it is enough to prove the theorem for $ HS(2k,k)$. Suppose the contrary, that
 $ HS(2k,k)$ is a Cayley graph, then $ Aut(HS(2k,k))$  has a subgroup $ R
 $ that
   acts regularly on $ V(HS(2k,k))
 $, then $ \mid R \mid = {2k \choose k} = \frac{2k!}{k!k!}$. If $ r
$ is an element of $ R $, then $ r =  {\tilde \sigma}\theta^i $,
where $ {\tilde \sigma} $ and $ \theta $ are defined in the proof of
Theorem 2.4 and $ i \in \{ 0,1\}$. Let $ M_1 =  \{\tilde \sigma \mid
\tilde \sigma \in R \}$, then $ M_1 $ is a subgroup of $ R $. Since
$ R $ acts on $ V(HS(2k,k))$ transitively, so $ R  $ contains an
element of the form $ \tilde { \sigma } \theta $.
 Now, if  $ M_2$ =$
\{
 \tilde {\alpha}  \theta | \tilde {\alpha}  \theta \in R \}$,
  then $ M_2 \tilde { \sigma } \theta  \subseteq M_1 $,
 because $ \tilde{\alpha} \theta \tilde{\sigma} \theta  $ = $  \tilde{\alpha} \tilde{\gamma
 }$.
 Then, $ | M_2| \leq |M_1| $. Since $ M_1 \tilde { \sigma } \theta \subseteq M_2
 $, then $ |M_1| \leq |M_2|$, so $ |M_1| = |M_2| = (1/2)R $.
   If $ M = \{\sigma \mid
\tilde \sigma \in M_1 \}$, then $ \mid M_1 \mid = \mid M \mid $ and
$ M $ is a subgroup of $ Sym(X)$ and every element of $ M $ fixes
the element $1$, where $ X = \{ 1,2,...,2k\}$. In fact $ M $ acts on
$ Y = \{2,...,2k \} $ and is $(k-1)$-homogenous on this set. Since $
2(k-1)\leq 2k-1 $, then $ M $ is $(k-2)$-homogenous on $ Y $. Hence
we must have, $ {{2k-1}\choose{k-2}} \mid \  \mid M \mid = (1/2){
{2k}\choose{k}}$, therefore $ 2(2k-1!)k!k! \mid (2k!)(k-2)!(k+1)!$,
hence $ k(k-1) \mid k(k+1)$, so $ k-1 \mid k+1 $, thus we must have
$ k \in \{1,2,3\} $. If $ k = 3 $, then $ \mid M \mid $  =$ (1/2) {6
\choose 3} $ = 10. Since $ 2 \mid {| M \mid} $, then there is an
element $ \sigma $ in $ M $ such that the order of $ \sigma $ is 2.
Note that $ \sigma $ is an element of $ Sym(6)$ that fixes $1$. If
we write $ \sigma $ in the form of a product of disjoint cycles,
then $ \sigma = (rs)$ or $ \sigma = (rs)(tu)$, where $ r,s,t,u \in
\{ 2,3,...,6\}$. In each of these cases, for $ \tilde {\sigma} \in R
$ and vertex $ v = \{ 1,r,s \}$ of $HS(6,3)$ we have $ \tilde
{\sigma} (v) = \{\sigma(1),\sigma(r),\sigma(s) \} = \{ 1,r,s\} = v
$.  Thus $ R $ can not be a regular subgroup of $ Aut(HS(6,3))$
which contradicts the assumption.

 \end{proof}
 \ \ \ If $ k = 2 $, then $ HS(4,2)$ is $ C_6 $, the cycle on 6
 vertices, which is the Cayley graph $ \Gamma = \Gamma (Z_6,
 \Omega)$, where $ Z_6 $ is the cyclic group of order 6 and $ \Omega = \{
 1,-1\}$. The graph $ FHS(4,2)$ is $ K_{3,3}$, the complete
 bipartite graph of degree 3, which is the Cayley graph $ \Gamma = \Gamma(Sym(3),
 \Omega)$, where $ \Omega = \{ (12),(23),(13)\}$ $ [2]$. \

 \

 {\bf ACKNOWLEDGMENT}    \

 The author is grateful to professor Alireza Abdollahi and professor A. Mohammadi Hassanabadi for their  helpful comments and thanks the Center of Excellence for Mathematics, University of Isfahan.

\end{document}